\newtheorem{lemma}{Lemma}[section]
\newtheorem{theorem}[lemma]{Theorem}
\newtheorem{remark}[lemma]{Remark}
\newtheorem{proposition}[lemma]{Proposition}
\newtheorem{corollary}[lemma]{Corollary}
\newtheorem{example}[lemma]{Example}
\newtheorem{definition}[lemma]{Definition}
\newcommand\matR{{\mathbb{R}}}
\begin{document}

\title{On the notion of scalar product for finite-dimensional diffeological vector spaces}

\author{Ekaterina~{\textsc Pervova}}

\maketitle

\begin{abstract}
\noindent It is known that the only finite-dimensional diffeological vector space that admits a diffeologically smooth scalar product is the standard space of appropriate dimension. In this note we consider a way 
to circumnavigate this issue, by introducing a notion of pseudo-metric, which, said informally, is the least-degenerate symmetric bilinear form on a given space. We apply this notion to make some observation 
on subspaces which split off as smooth direct summands (providing examples which illustrate that not all subspaces do), and then to show that the diffeological dual of a finite-dimensional diffeological vector 
space always has the standard diffeology and in particular, any pseudo-metric on the initial space induces, in the obvious way, a smooth scalar product on the dual.

\noindent MSC (2010): 53C15 (primary), 57R35 (secondary).
\end{abstract}

\section*{Introduction}

One of the first surprising findings which one makes when encountering diffeology for the first time is the inequality $L^{\infty}(V,W)<L(V,W)$, which says that the space of all (diffeologically) smooth linear maps 
between two diffeological vector spaces, $V$ and $W$, can be strictly smaller than that of all maps that are simply linear.\footnote{Say what? It happens in functional analysis already? Well, whatever. I prefer 
to keep it simple (jokes apart, in diffeology it happens in finite dimension already).} Stemming from that, an even more surprising situation presents itself: a (finite-dimensional) diffeological vector space that 
does not admit a smooth scalar product. This is a known fact (see \cite{iglesiasBook}, p. 74, Ex. 70), and easily established at that, yet, it is still surprising, in and of itself, and also for how easily this can be 
illustrated, using the presence of just one non-differentiable (in the usual sense) plot.

Thus, as mentioned in the above-given reference, any finite-dimensional diffeological vector space that admits a smooth scalar product is necessarily the usual $\matR^n$, with its usual smooth structure (the 
diffeology that consists of all usual smooth maps). The choices at this point are, to abandon the whole affair (meaning to concentrate on infinite-dimensional spaces, where the similar situation does not seem 
to occur), to consider a kind of pseudo-metrics (meaning the sort of least degenerate symmetric bilinear form that exists on a given space), or, finally, to re-define scalar product as ones taking values in $\matR$ 
endowed, not with the standard diffeology, but with the piecewise-smooth diffeology. We concentrate on the second of the above approaches, and it does lead to a few interesting conclusions, mainly regarding 
the diffeological dual and the fact that a pseudo-metric induces an isomorphism of it with a specific subspace, which is a smooth summand. Furthermore, it gives rise to a smooth scalar product on the dual, 
reflecting as much as possible of the usual duality for the standard vector spaces.

Finally, a disclaimer: a lot of what is written here might be of a kind of implicit knowledge for people working in the area. Part of the aim of this paper is to collect these facts in one place, and to make explicit 
what is implicit elsewhere.\footnote{Putting an interesting statement as an exercise in a book is just an invitation for somebody to re-discover it in good faith. Just sayin'.}

\paragraph{Acknowledgments} Some of the best things in life happen by chance: a chance encounter, a chance phrase, a chance look... This sounds trivial, but what is surprising is that I found this true of 
professional life as well; some of my most productive moments came about by chance. The very existence of this work (and those that followed it) is due to chance, and for it, I would like to thank the people who,
incidentally, are also responsible for some of the best moments in the last couple of years (all of them, of course, due to chance). These people are Prof. Riccardo Zucchi and Prof. Patrick Iglesias-Zemmour. I 
also would like to thank all the participants of the workshop ``On Diffeology etc.'' (Aix en Provence, June 24-26, 2015) where I had my first opportunity to discuss the reasoning that is at the origin of this paper.

\section{Diffeology and diffeological vector spaces}

We briefly recall the main definitions regarding diffeological spaces and diffeological vector spaces.

\paragraph{Diffeological spaces and smooth maps} We first recall the notion of a diffeological space and that of a smooth map between such spaces.

\begin{definition} \emph{(\cite{So2})} A \textbf{diffeological space} is a pair $(X,\mathcal{D}_X)$ where $X$ is a set and $\mathcal{D}_X$ is a specified collection of maps $U\to X$ (called \textbf{plots}) for
each open set $U$ in $\matR^n$ and for each $n\in\mathbb{N}$, such that for all open subsets $U\subseteq\matR^n$ and $V\subseteq\matR^m$ the following three conditions are satisfied:
\begin{enumerate}
\item (The covering condition) Every constant map $U\to X$ is a plot;
\item (The smooth compatibility condition) If $U\to X$ is a plot and $V\to U$ is a smooth map (in the usual sense) then the composition $V\to U\to X$ is also a plot;
\item (The sheaf condition) If $U=\cup_iU_i$ is an open cover and $U\to X$ is a set map such that each restriction $U_i\to X$ is a plot then the entire map $U\to X$ is a plot as well.
\end{enumerate}
\end{definition}

Usually, instead of $(X,\mathcal{D}_X)$ one writes simply $X$ to denote a diffeological space.

\begin{definition} \emph{(\cite{So2})} Let $X$ and $Y$ be two diffeological spaces, and let $f:X\to Y$ be a set map. The map $f$ is said to be \textbf{smooth} if for every plot $p:U\to X$ of $X$ the composition 
$f\circ p$ is a plot of $Y$.
\end{definition}

The standard examples of diffeological spaces are smooth manifolds, with diffeology consisting of all usual smooth maps $\matR^k\supset U\to M$, for all $k\in\mathbb{N}$ and for all domains $U$ in $\matR^k$. 
A less standard example is, for instance, any Euclidean space $\matR^n$ with the so-called \emph{wire diffeology}, namely, the diffeology \emph{generated} (see the next paragraph) by the set
$C^{\infty}(\matR,\matR^n)$.

\paragraph{Generated diffeology} Let $X$ be a set, and let $A=\{U_i\to X\}_{i\in I}$ be a set of maps with values in $X$. The \textbf{diffeology generated by $A$} is the smallest, with respect to inclusion, 
diffeology on $X$ that contains $A$. It consists of all maps $f:V\to X$ such that there exists an open cover $\{V_j\}$ of $V$ such that $f$ restricted to each $V_j$ either is constant or factors through some
element $U_i\to X$ in $A$ via a smooth map $V_j\to U_i$. Note that this construction illustrates the abundance of diffeologies on a given set: we can build one starting from \emph{any} set of maps; in
particular, given any map $p:U\to X$, there is (usually more than one) diffeology containing it.

\paragraph{Fine diffeology and coarse diffeology} Given a set $X$, the set of all possible diffeologies on $X$ is partially ordered by inclusion, namely, a diffeology $\mathcal{D}$ on $X$ is said to be
\textbf{finer} than another diffeology $\mathcal{D}'$ if $\mathcal{D}\subset\mathcal{D}'$ (whereas $\mathcal{D}'$ is said to be \textbf{coarser} than $\mathcal{D}$). Among all diffeologies, there is the finest one, 
which turns out to be the natural \textbf{discrete diffeology} and which consists of all locally constant maps $U\to X$; and there is also the coarsest one, which consists of \emph{all} possible maps $U\to X$, for 
all $U\subseteq\matR^n$ and for all $n\in\mathbb{N}$. It is called \emph{the} \textbf{coarse diffeology}.

\paragraph{Subset diffeology} Let $X$ be a diffeological space, and let $Y\subset X$ be a subset of it. The \textbf{subset diffeology} on $Y$ is the \emph{coarsest} diffeology on $Y$ such that the inclusion 
map $Y\hookrightarrow X$ is smooth. What this means, specifically, is that $p:\matR^k\supset U\to Y$ is a plot for the subset diffeology on $Y$ if and only if its composition with the inclusion map is a plot of $X$.

\paragraph{Products and sums of diffeological spaces} Let $\{X_i\}_{i\in I}$ be a collection of diffeological spaces, where $I$ is a set of indices. The \textbf{product diffeology} $\mathcal{D}$ on the product
$\prod_{i\in I}X_i$ is the \emph{coarsest} diffeology such that for each index $i\in I$ the natural projection $\pi_i:\prod_{i\in I}X_i\to X_i$ is smooth. Note, in particular, that for the product of two spaces 
$X_1\times X_2$ every plot is locally of form $(p_1,p_2)$, where $p_i$ is a plot of $X_i$ for $i=1,2$.

If we now consider the disjoint union $\coprod_{i\in I}X_i$ of the spaces $X_i$, this can be endowed with the \textbf{sum diffeology}, which by definition is the \emph{finest} diffeology such that all the natural 
injections $X_i\hookrightarrow\coprod_{i\in I}X_i$ are smooth. The plots for this diffeology are those maps that are plots of one of the components of the sum.

\paragraph{Diffeological vector space} Let $V$ be a vector space over $\matR$. The \textbf{vector space diffeology} on $V$ is any diffeology of $V$ such that the addition and the scalar multiplication are 
smooth, that is,
$$[(u,v)\mapsto u+v]\in C^{\infty}(V\times V,V)\mbox{ and }[(\lambda,v)\mapsto\lambda v]\in C^{\infty}(\matR\times V,V),$$ where $V\times V$ and $\matR\times V$ are equipped with the product diffeology. A 
\textbf{diffeological vector space} over $\matR$ is any vector space $V$ over $\matR$ equipped with a vector space diffeology.

\paragraph{Fine diffeology on vector spaces} The \textbf{fine diffeology} on a vector space $\matR$ is the \emph{finest} vector space diffeology on it; endowed with such, $V$ is called a \textbf{fine vector space}. 
As an example, $\matR^n$ with the standard diffeology is a fine vector space.

The fine diffeology admits a more or less explicit description of the following form: its plots are maps $f:U\to V$ such that for all $x_0\in U$ there exist an open neighbourhood $U_0$ of $x_0$, a family of smooth 
maps $\lambda_{\alpha}:U_0\to\matR$, and a family of vectors $v_{\alpha}\in V$, both indexed by the same finite set of indices $A$, such that $f|_{U_0}$ sends each $x\in U_0$ into
$\sum_{\alpha\in A}\lambda_{\alpha}(x)v_{\alpha}$:
$$f(x)=\sum_{\alpha\in A}\lambda_{\alpha}(x)v_{\alpha}\mbox{ for }x\in U_0.$$ A finite family $(\lambda_{\alpha},v_{\alpha})_{\alpha\in A}$, with $\lambda\in C^{\infty}(U_0,\matR)$ and $v_{\alpha}\in V$, defined
on some domain $U_0$ and satisfying the condition just stated, is called a \emph{local family} for the plot $f$.

Fine vector spaces possess the following property (\cite{iglesiasBook}, 3.9), which in general not true: if $V$ is a fine diffeological vector space, and $W$ is any other diffeological vector space, then every 
linear map $V\to W$ is smooth, \emph{i.e.}, $L^{\infty}(V,W)=L(V,W)$.

\paragraph{Smooth scalar products} The existing notion of a \textbf{Euclidean diffeological vector space} does not differ much from the usual notion of the Euclidean vector space. A diffeological space $V$ is 
Euclidean if it is endowed with a scalar product that is smooth with respect to the diffeology of $V$ and the standard diffeology of $\matR$; that is, if there is a fixed map $\langle , \rangle:V\times V\to\matR$ that 
has the usual properties of bilinearity, symmetricity, and definite-positiveness and that is smooth with respect to the diffeological product structure on $V\times V$ and the standard diffeology on $\matR$. As has 
already been mentioned in the Introduction, for finite-dimensional spaces this implies that the space in question is just the usual $\matR^n$, for appropriate $n$ (see \cite{iglesiasBook}). For reasons of 
completeness we recall the precise statement and give a detailed proof of this fact.\footnote{Special thanks go to Patrick Iglesias-Zemmour and Yael Karshon who made me first think of it; I would have missed 
it otherwise.}

\begin{proposition}\label{smooth:prod-smooth:plot}
Let $V$ be $\matR^n$ endowed with a vector space diffeology $\mathcal{D}$ such that there exists a smooth scalar product. Then every plot $p$ of $\mathcal{D}$ is a smooth map in the usual sense.
\end{proposition}

\begin{proof}
Let $A$ an $n\times n$ non-degenerate symmetric matrix such that the associated bilinear form on $V$ is smooth with respect to the diffeology $\mathcal{D}$, and let $\{v_1,\ldots,v_n\}$ be its eigenvector 
basis. Let $\lambda_i$ be the eigenvalue relative to the eigenvector $v_i$.

Let $p:U\to V$ be a plot of $\mathcal{D}$; we wish to show that it is smooth as a map $U\to\matR^n$. Recall that $\langle\cdot|\cdot\rangle_A$ being smooth implies that for any two plots $p_1,p_2:U\to V$ the 
composition $\langle\cdot|\cdot\rangle_A\circ(p_1,p_2)$ is smooth as a map $U\to\matR$. Let $c_i:U\to V$ be the constant map $c_i(x)=v_i$; this is of course a plot of $\mathcal{D}$. Set $p_1=p$ and
$p_2=c_i$; then the above composition map writes as $\lambda_i\langle p(x)|v_i\rangle$, where $\langle\cdot|\cdot\rangle$ is the canonical scalar product on $\matR^n$.

Since $A$ is non-degenerate, all $\lambda_i$ are non-zero; this implies that each function $\langle p(x)|v_i\rangle$ is a smooth map. And since $v_1,\ldots,v_n$ form a basis of $V$, this implies that for any 
$v\in V$ the function $\langle p(x)|v\rangle$ is a smooth one. In particular, this is true for any $e_j$ in the canonical basis of $\matR^n$; and in the case $v=e_j$ the scalar product $\langle p(x)|e_j\rangle$ is 
just the $j$-th component of $p(x)$. Thus, we obtain that all the components of $p$ are smooth functions, therefore $p$ is a smooth map.
\end{proof}

\section{The degeneracy of smooth forms on non-standard spaces}

As mentioned in the introduction, pretty much all finite-dimensional diffeological vector spaces do not have smooth non-degenerate bilinear forms; there is only one for each dimension that does, and that is the 
standard one. For exposition purposes, we start with a detailed illustration of how this happens, via an example (this part is however easily deduced from the proof of Proposition \ref{smooth:prod-smooth:plot} 
above). We then consider the degree of degeneracy of smooth bilinear forms on a given vector space (what does it mean, for a given $V$, to be the least-degenerate bilinear form on it?), going on to the 
question of subspaces which do, or do not, split off as smooth direct summands, and finally considering the diffeological dual.

\subsection{Pseudo-metrics}

A \emph{pseudo-metric} is, roughly speaking, the best possible substitute for the notion of a smooth scalar product in the case of a finite-dimensional vector space whose diffeology is not the standard one.

\paragraph{The first example} The following example, already considered in \cite{multilinear}, is presented for illustrative purposes.

\begin{example}
Let $V=\matR^n$, and let $v_0\in V$ be any non-zero vector. Let $p:\matR\to V$ be defined as $p(x)=|x|v_0$; let $\mathcal{D}$ be any vector space diffeology on $V$ that contains $p$ as a plot.\footnote{Such 
diffeology does certainly exist; for instance, the coarse diffeology would do.} Suppose that $A$ is a symmetric $n\times n$ matrix, and assume that the bilinear form $\langle v|w\rangle_A=v^tAw$ associated to 
$A$ is smooth with respect to $\mathcal{D}$ and the standard diffeology on $\matR$. We claim that $A$ is degenerate.

Indeed, $\langle v|w\rangle_A$ being smooth implies, in particular, that for any two plots $p_1,p_2:\matR\to V$ of $V$ the composition map $\langle \cdot|\cdot\rangle_A\circ (p_1,p_2):\matR\to\matR$ is smooth 
in the usual sense; this map acts as $\matR\ni x\mapsto(p_1(x))^tAp_2(x)$. Let $w\in V$ be an arbitrary vector; denote by $c_w:\matR\to V$ the constant map that sends everything to $w$, $c_w(x)=w$ for all 
$x\in\matR$. Such a map is a plot for any diffeology on $V$. But then $(\langle \cdot|\cdot\rangle_A\circ (p,c_w))(x)=|x|v_0^tAw$; the only way for this to be smooth is to have $v_0^tAw=0$, and since there was 
no assumption on $w$, this implies that $\langle v_0|\cdot\rangle_A$ is identically zero on the whole of $V$, \emph{i.e.}, that $A$ is degenerate. In other words, $V$ does not admit a smooth scalar product.
\end{example}

Note that in the above example we could have taken $p(x)=f(x)v_0$ with $f(x)$ \emph{any} function $\matR\to\matR$ that is not differentiable in at least one point; this suggests that there are numerous 
diffeological vector spaces that do not admit diffeologically smooth scalar products. In fact, Proposition \ref{smooth:prod-smooth:plot} shows that the phenomenon is much more general: in order to have a 
smooth scalar product, we must ensure that all plots are smooth maps.

\paragraph{The signature of a smooth bilinear form} Suppose that $V$ is a finite-dimensional diffeological vector space; let $A$ be a symmetric $n\times n$ matrix (with $n=\dim V$) such that the associated 
bilinear form $\langle\cdot|\cdot\rangle_A$ on $V$ is smooth. Let $(\lambda_+,\lambda_-,\lambda_0)$ be the signature of this form; recall that $V^*$ stands for the \emph{diffeological dual} of $V$ (see 
\cite{vincent} and \cite{wu}), \emph{i.e.} the set of all smooth linear functionals on $V$.

\begin{lemma}\label{eigenvalue0:smooth:form:lem}
Let $V^*$ be the diffeological dual of $V$. Then
$$\lambda_0\geq n-\dim(V^*).$$
\end{lemma}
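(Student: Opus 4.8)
The plan is to manufacture, out of the smooth bilinear form $\langle\cdot|\cdot\rangle_A$, a linear map from $V$ into its diffeological dual $V^*$ whose image has dimension exactly $\mathrm{rank}(A)=\lambda_++\lambda_-=n-\lambda_0$. Since that image is a subspace of $V^*$, comparing dimensions will give $n-\lambda_0\leqslant\dim(V^*)$, which is precisely the claimed inequality $\lambda_0\geqslant n-\dim(V^*)$.

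First I would fix a vector $w\in V$ and consider the functional $L_w:V\to\matR$ defined by $L_w(v)=\langle v|w\rangle_A=v^tAw$, which is clearly linear in $v$. To see that it is \emph{smooth}, and hence an element of $V^*$, I take an arbitrary plot $p:U\to V$ and apply the smoothness of the form with $p_1=p$ and $p_2=c_w$ (the constant plot at $w$, which is a plot for any diffeology): the composition $x\mapsto(p(x))^tAw=L_w(p(x))$ is then smooth in the usual sense. This is exactly the computation already carried out in the first example. Thus $L_w\in V^*$ for every $w$, and the assignment $\Phi:w\mapsto L_w$ defines a linear map $\Phi:V\to V^*$.

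Next I would identify the rank of $\Phi$. Regarding $\Phi$ as a map into the full algebraic dual of $V\cong\matR^n$, the functional $L_w$ is identified, via the standard pairing, with the vector $Aw$; hence in coordinates $\Phi$ is simply multiplication by $A$, and its rank equals $\mathrm{rank}(A)$. Because $A$ is symmetric with signature $(\lambda_+,\lambda_-,\lambda_0)$, we have $\mathrm{rank}(A)=\lambda_++\lambda_-=n-\lambda_0$. Since every $L_w$ lies in $V^*$, the image $\Phi(V)$ is a subspace of $V^*$ of dimension $n-\lambda_0$, whence $n-\lambda_0=\dim\Phi(V)\leqslant\dim(V^*)$, and rearranging yields the statement.

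I do not expect a serious obstacle here. The only point that genuinely requires attention is the verification that each $L_w$ is a \emph{smooth} functional, which is immediate from the hypothesis on $\langle\cdot|\cdot\rangle_A$; once this is in hand, everything reduces to the elementary observation that $V^*$ is finite-dimensional (it sits inside the $n$-dimensional algebraic dual, so $\dim V^*\leqslant n$) together with the routine fact that a symmetric matrix has rank $\lambda_++\lambda_-$. The comparison of dimensions is then purely formal.
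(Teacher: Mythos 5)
Your proposal is correct and is essentially the paper's own argument: both hinge on the observation that pairing the smooth form against a constant plot shows each functional $\langle\cdot|w\rangle_A$ lies in $V^*$, and both then count that these functionals span a subspace of dimension $\mathrm{rank}(A)=n-\lambda_0$. The paper phrases this by exhibiting the $n-\lambda_0$ linearly independent functionals $v^i=\langle v_i|\cdot\rangle_A$ coming from the eigenvectors with nonzero eigenvalue, whereas you package the same computation as the rank of the map $w\mapsto L_w$; the difference is purely cosmetic.
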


\begin{proof}
Choose a basis $\{v_1,\ldots,v_n\}$ of eigenvectors of $A$ such that the last $\lambda_0$ vectors are those relative to the eigenvalue $0$. For $i=1,\ldots,n-\lambda_0$ let $v^i\in V^*$ be the dual function, 
that is, $v^i(w)=\langle v_i|w \rangle_A$ (it is obviously smooth since $\langle\cdot|\cdot\rangle_A$ is smooth, so is an element of $V^*$). It remains to notice that by standard reasoning the elements 
$v^1,\ldots,v^{n-\lambda_0}$ are linearly independent (they belong to non-zero eigenvalues), so $n-\lambda_0\leq\dim(V^*)$, hence the conclusion.
\end{proof}

Notice that we can always find a smooth bilinear (symmetric) form for which we have $\lambda_0=n-\dim(V^*)$. It suffices to take any basis $\{f_1,\cdots,f_k\}$ of $V^*$ (with $k=\dim(V^*)$) and consider 
$\sum_i f_i\otimes f_i$; this is obviously a symmetric bilinear form on $V$, and it being smooth follows from Theorem 2.3.5 of \cite{vincent}. By an obvious argument, we can see that all the eigenvalues are 
non-negative; indeed, $(\sum_i f_i\otimes f_i)(v\otimes v)=\sum_i(f_i(v))^2\geq 0$. Finally, the multiplicity of the eigenvalue $0$ is precisely $n-k$, since $f_1,\cdots,f_k$ form a basis of $V^*$.

Thus, the following definition makes sense.

\begin{definition}
Let $V$ be a diffeological vector space of finite dimension $n$. A \textbf{diffeological pseudo-metric} on $V$ is a smooth bilinear symmetric form on $V$ such that the eigenvalue $0$ has multiplicity
$n-\dim(V^*)$, and all the other eigenvalues are positive.
\end{definition}

A pseudo-metric is the best notion of a smooth\footnote{Which means a bilinear form that takes values in $\matR$ with standard diffeology and is smooth with respect to the latter.} metric that can exist for an 
arbitrary finite-dimensional diffeological vector space; note that if a given space is standard, \emph{i.e.} is of the only type which admits a smooth metric, then obviously a pseudo-metric is a usual Euclidean 
metric. Of course, one can look for true metrics which are not smooth but are at least piecewise-differentiable, or simply continuous, something that we will do in the section that follows; meanwhile, we explore 
whatever applications pseudo-metrics might have.

\begin{remark}
While it is more usual to use scalar products for various constructions of (multi)linear algebra, we do note that a number of them hold in part for symmetric bilinear forms. A specific example would be a Clifford 
algebra, which is defined for a vector space endowed with a symmetric bilinear form that \emph{a priori} does not have to be a scalar product (an extension of this notion into the diffeological context was 
considered in \cite{clifford}). Obviously, if it is not then one cannot speak of unitary action on the exterior algebra, for example; yet, various constructions hold.
\end{remark}

\subsection{Smooth direct sums and pseudo-metrics}

One easily observed fact that distinguishes diffeological vector spaces from their usual linear counterpart is that, in the diffeological case, there are two types of direct sum: one that is smooth (in the sense 
described just below) and the other that is not.

\paragraph{Smooth splittings as direct sums} In general, if we have a diffeological vector space $V$ (of finite dimension) that, as a usual vector space, decomposes into a direct sum $V=V_1\oplus V_2$,
then \emph{a priori} the following two diffeologies are different:
\begin{itemize}
\item the given diffeology of $V$, and
\item the vector space sum diffeology on $V_1\oplus V_2$ obtained from the subset diffeologies on $V_1$ and $V_2$.\footnote{The vector space sum diffeology on a direct sum of diffeological vector spaces 
is the finest vector space diffeology that contains the sum diffeology; it is formed of all maps that locally are (formal) sums of plots of the summands.}
\end{itemize}

\begin{example}
Let us consider an example where the two might actually be different. Take $V=\matR^3$ endowed with the vector space diffeology that is generated by the map $p:\matR\to V$ acting by $p(x)=|x|(e_2+e_3)$. 
Set $V_1=\mbox{Span}(e_1,e_2)$ and $V_2=\mbox{Span}(e_3)$. Observe that the subset diffeology on each of these subspaces is just the standard diffeology of $\matR^2$ and $\matR$ respectively.
\footnote{Let us prove this claim. Let $q:U\to V_1$ be a plot for the subset diffeology on $V_1$; let $q_i:U\to\matR$ be the projection on $\mbox{Span}(e_i)\leq V_1$ for $i=1,2$. It suffices to show that $q_i$ 
must necessarily be a smooth map (in the usual sense). Note that by definition of the subset diffeology $Q(u)=(q_1(u),q_2(u),0)$ must be a plot for the diffeology of $V$. By definition of the latter diffeology in 
the neighbourhood of zero it writes as $Q(u)=\sum_{i=1}^3 f_i(u)e_i+\sum_{j=1}^k h_j(u)\cdot(p\circ F_j)(u)$, where $f_1,f_2,f_3$, $h_1,\ldots,h_k$, and $F_1,\ldots,F_k$ are the usual smooth maps 
$U\to\matR$; therefore up to smooth summands $Q(u)$ writes as $(0,\sum_j h_j(u)|F_j(u)|,\sum_j h_j(u)|F_j(u)|)$, from which we conclude that $\sum_j h_j(u)|F_j(u)|\equiv 0$ in the neighbourhood of $u=0$. 
This implies that $q_1,q_2$ are the usual smooth maps, and the claim is proved for $V_1$; the reasoning is analogous in the case of $V_2$.} This means that the vector space sum diffeology on their direct 
sum is again the standard one, which the diffeology on $V$ is not; indeed, the projection on the second coordinate is smooth (in the usual sense) for the standard diffeology, which does not happen for the 
diffeology of $V$.
\end{example}

\paragraph{Standard subspaces that split off as direct summands} The example given in the previous paragraph easily leads to some more general observations, such as the following.

\begin{lemma}\label{proj:smooth:lem}
Let $V$ be a finite-dimensional diffeological vector space, let $V_1\leq V$ be a subspace such that the subset diffeology is standard, and let $V=V_1\oplus V_2$ be its decomposition into a direct sum for some 
subspaces $V_1,V_2\leq V$. This decomposition is smooth if and only if there exists a basis $\{v_1,\cdots,v_k\}$ of $V_1$ such that the projection\footnote{By this we mean the map $V\to\matR$ acting by 
$\pi_i(\sum_i\alpha_iv_i+w)=\alpha_i$, where $w\in V_2$.} on each vector $v_i$ is a smooth linear functional on $V$. 
\end{lemma}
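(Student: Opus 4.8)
The plan is to recast ``the decomposition is smooth'' — meaning that the given diffeology $\calD$ of $V$ coincides with the vector space sum diffeology assembled from the subset diffeologies of $V_1$ and $V_2$ — as a statement about the smoothness of the linear projection $\pi_{V_1}\colon V\to V_1$ onto the first summand (read with the subset diffeology on $V_1$). Since that subset diffeology is standard by hypothesis, smoothness of $\pi_{V_1}$ is controlled exactly by the real-valued coordinate functionals $\pi_i$, which is how the basis $\{v_1,\ldots,v_k\}$ and the condition in the statement will enter.

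First I would record the one inclusion that holds with no hypothesis at all: the vector space sum diffeology is always finer than $\calD$. A plot of the former is, locally, a sum $p_1+p_2$ with $p_i$ a plot of the subset diffeology on $V_i$; because the inclusions $V_i\hookrightarrow V$ are smooth and the addition of $V$ is smooth, each such $p_1+p_2$ is a plot of $\calD$, and the sheaf condition upgrades this from the pieces to the whole map. Hence only the reverse inclusion — that every plot of $\calD$ is \emph{locally} a formal sum of plots of $V_1$ and $V_2$ — has real content, and this is where the assumption on the $\pi_i$ is used.

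For the direction $(\Leftarrow)$ I would assume each $\pi_i\colon V\to\matR$ smooth and take an arbitrary plot $p\colon U\to V$ of $\calD$. Setting $p_1=\pi_{V_1}\compo p$ and $p_2=p-p_1$ gives $p=p_1+p_2$ pointwise, with $p_1$ valued in $V_1$ and $p_2$ in $V_2$. In the basis $\{v_1,\ldots,v_k\}$ the components of $p_1$ are precisely the functions $\pi_i\compo p$, which are smooth in the usual sense since each $\pi_i$ is smooth and $p$ is a plot; as the subset diffeology on $V_1$ is standard, this makes $p_1$ a plot of $V_1$. Then $p_2=p-p_1$ is a difference of two plots of $\calD$ (the map $p_1$, pushed into $V$ through the smooth inclusion, being one of them), hence itself a plot of $\calD$ that lands in $V_2$, and therefore a plot of the subset diffeology on $V_2$. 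Thus $p=p_1+p_2$ displays $p$ in the vector space sum diffeology, supplying the missing inclusion and equality.

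For $(\Rightarrow)$ I would assume the decomposition smooth, so that a plot of $\calD$ is locally of the form $p_1+p_2$; then $\pi_{V_1}(p_1+p_2)=p_1$ is a plot of $V_1$, and the sheaf condition gives smoothness of $\pi_{V_1}$. Composing with the coordinate functionals of any fixed basis of $V_1$ — which are smooth because $V_1$ is standard — shows every $\pi_i$ is smooth, as required. The only step demanding care is the bookkeeping of the two diffeologies: the hard part is to notice that the sum diffeology is automatically the finer of the two, so that the entire content of the lemma sits in producing the local decomposition of a generic plot of $\calD$, after which the smoothness of the $V_2$-component $p_2$ is free once the $V_1$-component $p_1$ has been handled.
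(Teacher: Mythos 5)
Your proof is correct and follows essentially the same route as the paper's: both directions hinge on writing a plot $p$ as $p_1+p_2$ with $p_1=\sum_i(\pi_i\compo p)v_i$, using standardness of $V_1$ to control $p_1$ and the vector space diffeology axiom to see that $p_2=p-p_1$ is a plot of $V_2$. The only difference is that you make explicit the automatic inclusion of the sum diffeology into $\calD$, which the paper leaves implicit; this is a welcome clarification but not a different argument.
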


\begin{proof}
This is a consequence of Theorem 3.16 of \cite{wu}. Indeed, suppose that $V=V_1\oplus V_2$ is a smooth decomposition. The projection on $V_1$ is then smooth by definition, and by assumption that $V_1$ 
is standard, so is the projection of $V_1$ onto any $v\in V_1$. The projection of $V$ onto the vector $v$, being the composition of the two projections, is therefore smooth as well.

Suppose now that there exists a basis $\{v_1,\cdots,v_k\}$ of $V_1$ as in the statement. Since $V_1$ is standard and all projections $V$ on $v_i$ are smooth, the projection of $V$ onto $V_1$ is smooth as 
well, and it suffices to use the equivalence of conditions (1) and (3) of Theorem 3.16 of \cite{wu} to obtain the desired claim.
\end{proof}

\paragraph{The maximal standard subspace relative to a pseudo-metric} As follows from the above discussion, a pseudo-metric exists on any diffeological vector space (and we cannot do any better). On the
other hand, given a pseudo-metric $\langle\cdot|\cdot\rangle_A$ on a fixed diffeological vector space $(V,\mathcal{D})$ of finite dimension, the subspace $V_0$ of $V$ spanned by all the eigenvectors
belonging to the positive eigenvalues of $\langle\cdot|\cdot\rangle_A$ and considered with the subset diffeology $\mathcal{D}_0$ induced by $\mathcal{D}$ is endowed, via the restriction of 
$\langle\cdot|\cdot\rangle_A$, with a true metric; it follows then from \cite{iglesiasBook}, Ex. 70, p.74 that $(V_0,\mathcal{D}_0)$ is the standard Euclidean space of dimension $n-\lambda_0=n-\dim(V^*)$.

\begin{lemma}\label{V_0:smooth:summand:lem}
The subspace $V_0$ splits off as a smooth direct summand.
\end{lemma}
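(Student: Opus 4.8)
The plan is to reduce the statement to the criterion already established in Lemma \ref{proj:smooth:lem}. Since $(V_0,\calD_0)$ carries the standard subset diffeology (as recalled just before the statement), that lemma applies verbatim: to exhibit $V_0$ as a smooth direct summand it suffices to produce a basis $\{v_1,\ldots,v_k\}$ of $V_0$ (with $k=n-\lambda_0=\dim(V^*)$) together with a complement $V_2$ so that the projection onto each $v_i$ along $V_2$ is a smooth linear functional on $V$. The natural candidates are dictated by the spectral decomposition of $A$.

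Concretely, I would first fix an eigenvector basis $\{v_1,\ldots,v_n\}$ of $A$ that is \emph{orthonormal} with respect to the canonical scalar product $\langle\cdot|\cdot\rangle$ on $\matR^n$; this is possible because $A$ is symmetric, so eigenspaces for distinct eigenvalues are orthogonal and one may orthonormalize inside each eigenspace. Order the basis so that $v_1,\ldots,v_k$ belong to the positive eigenvalues $\lambda_1,\ldots,\lambda_k$ and $v_{k+1},\ldots,v_n$ belong to the eigenvalue $0$. Then $V_0=\mathrm{Span}(v_1,\ldots,v_k)$ and I would take $V_2=\mathrm{Span}(v_{k+1},\ldots,v_n)$, which is exactly the kernel of $\langle\cdot|\cdot\rangle_A$, so that $V=V_0\oplus V_2$ as usual vector spaces.

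The key computation is to identify the projections $\pi_i$ onto $v_i$ (for $i\leqslant k$) with smooth functionals. As in the proof of the signature lemma, the dual functionals $v^i(w)=\langle v_i|w\rangle_A$ are smooth elements of $V^*$. Using $A v_j=\lambda_j v_j$ and the orthonormality $v_i^t v_j=\delta_{ij}$, one gets $v^i(v_j)=\lambda_i\delta_{ij}$. Hence for $w=\sum_{j=1}^n\alpha_j v_j$ one has $v^i(w)=\lambda_i\alpha_i$, and since $\lambda_i>0$ for $i\leqslant k$ this yields $\pi_i=\lambda_i^{-1}v^i$, a scalar multiple of a smooth functional and therefore smooth. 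Applying Lemma \ref{proj:smooth:lem} to the basis $\{v_1,\ldots,v_k\}$ of $V_0$ then gives that the decomposition $V=V_0\oplus V_2$ is smooth, which is the claim.

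The only delicate point is the step $\pi_i=\lambda_i^{-1}v^i$: this holds precisely because the zero-eigenvalue eigenvectors spanning $V_2$ are canonically orthogonal to $V_0$, so each $v^i$ annihilates $V_2$ and thus records the $v_i$-coordinate alone. I expect this to be the main thing to get right, since it is where the positivity of the surviving eigenvalues and the orthogonal choice of eigenbasis are both used; everything else is the bookkeeping of transporting the spectral data of $A$ into the hypothesis of Lemma \ref{proj:smooth:lem}.
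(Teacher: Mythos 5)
Your proposal is correct and follows essentially the same route as the paper: both reduce to Lemma \ref{proj:smooth:lem} via the orthonormal eigenbasis of $A$ and identify the projection onto each positive-eigenvalue eigenvector $v_i$ as $\lambda_i^{-1}\langle v_i|\cdot\rangle_A$, a scalar multiple of a smooth functional. The only cosmetic difference is that the paper carries out this computation plot-by-plot ($\langle v_i|p(x)\rangle_A=\lambda_i\|v_i\|^2(\pi\circ p)(x)$) while you phrase it once at the level of linear functionals.
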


\begin{proof}
Denote by $V_1$ the subspace generated by all the eigenvectors relative to the eigenvalue $0$. Obviously, $V$ splits as the direct sum $V=V_0\oplus V_1$ in the usual sense; we need to check that this
decomposition is smooth, that is, that the diffeology on $V$ coincides with the direct sum diffeology on $V_0\oplus V_1$. Now, by Lemma \ref{proj:smooth:lem}, it suffices to check that the projection on each 
eigenvector in some basis of $V_0$ is a smooth linear functional on the whole of $V$.

The matrix $A$ being symmetric, there exists an orthonormal (with respect to the usual scalar product on $\matR^n$ underlying $V$) basis $\{v_1,\cdots,v_n\}$ of $V$ composed of eigenvectors of $A$,
where we can obviously assume that the eigenvectors relative to all the positive eigenvalues (and so forming a basis of $V_0$) are the first $k$ vectors, $v_1,\ldots,v_k$. Let $v_i$ be one of these, relative to 
the eigenvalue $\lambda_i$, let $\pi:V\to\matR$ be the corresponding projection, and let $p:U\to V$ be any plot of $V$; we need to show that $\pi\circ p:U\to\matR$ is smooth in the usual sense. Write 
$p(x)=\sum_{j=1}^n p_j(x)v_j$; recall that $p_i(x)=\pi\circ p$.

Since $A$ defines a smooth bilinear form on $V$, and all the constant maps are plots, the assignment $x\mapsto\langle v_i|p(x) \rangle_A$ defines a map $U\to\matR$ which is smooth in the usual sense. 
However, we have $\langle v|p(x)\rangle_A=\sum_{j=1}^n\lambda_jp_j(x)\langle v_i|v_j\rangle=\lambda_i||v_i||^2(\pi\circ p)(x)$, where $||v_i||$ is the usual Euclidean norm of $v_i$. Since neither it nor 
$\lambda_i$ are zero, we conclude that $\pi\circ p$ is smooth, as wanted.
\end{proof}

Furthermore, the following is true.

\begin{proposition}\label{V_0:maximal:prop}
The subspace $V_0$ is a maximal subspace of $V$ with the subset diffeology that is standard and which splits off as a smooth direct summand.
\end{proposition}

\begin{proof}
Suppose that $V_1>V_0$ is a bigger subspace of $V$ such that its subset diffeology is standard and $V_1$ also splits off as a smooth direct summand; let $V=V_1\oplus V_1'$ be the corresponding smooth 
direct sum decomposition. Define a new bilinear bilinear form $\varphi$ on $V$ by setting it to be the zero form on $V_1'$, a scalar product on $V_1$, and $V_1$ and $V_1'$ to be orthogonal. Since by 
assumption the decomposition is smooth and $V_1$ has the standard diffeology, $\varphi$ is smooth, and by construction it is symmetric and positive semi-definite. Furthermore, it has rank 
$\dim(V_1)>\dim(V_0)$, which contradicts Lemma \ref{eigenvalue0:smooth:form:lem}, and we obtain the claim.
\end{proof}

In other words, a pseudo-metric on $V$ allows to extract, in a sense, from $V$ its biggest standard part, on which the diffeology includes essentially the usual smooth maps. Let us be more precise.

\begin{corollary}
Let $V=W_0\oplus W_1$ be a smooth decomposition such that $\dim(W_0)\geq\dim(V^*)$, and the subset diffeology on $W_0$ is standard. Then there exists a pseudo-metric $\langle\cdot|\cdot\rangle_A$ on 
$V$ such that $W_0$ is the space generated by the eigenvectors of $A$ relative to the positive eigenvalues, and $W_1$ is the space generated by the eigenvectors relative to the eigenvalue $0$.
\end{corollary}

In particular, $\dim(W_0)=\dim(V^*)$. To summarize, all subspaces of $V$ with standard sub-diffeology that in addition split off as smooth direct summands are associated to some pseudo-metric, provided 
they have a sufficiently large dimension. We can actually show more: there is only one of them.

\begin{proposition}
The subspace $V_0$ is an invariant of the finite-dimensional diffeological space $V$, \emph{i.e.}, it does not depend on the choice of a pseudo-metric $\langle\cdot|\cdot\rangle_A$.
\end{proposition}

\begin{proof}
Let $W_0\neq V_0$ be another subspace of $V$ of dimension $k=\dim(V^*)$ such that the subset diffeology on $W_0$ is standard and $W_0$ splits off as a smooth summand, $V=W_0\oplus W_1$. Let
$w_0\in W_0\setminus V_0$; it writes uniquely as $w_0=v_0+v_1$. Note that by Lemma \ref{proj:smooth:lem} the projections on both $w_0$ and $v_0$ are smooth linear functionals; therefore so is the
projection on $v_1$. That would imply that $\mbox{Span}(V_0,v_1)$ has standard diffeology and splits off as a smooth summand; since its dimension is strictly greater than $\dim(V_0)=\dim(V^*)$, this
is a contradiction.
\end{proof}

\subsection{The metric on $V^*$ induced by a pseudo-metric on $V$}

Here we consider a finite-dimensional diffeological vector space $V$ (we assume that its underlying vector space is identified with $\matR^n$), endowed with a pseudo-metric $\langle\cdot|\cdot\rangle_A$ 
given by a symmetric matrix $A$. As for usual vector spaces, the pseudo-metric induces a symmetric bilinear form on the diffeological dual $V^*$ of $V$; in this section we deal with this induced form.

\paragraph{The functional diffeology on $V^*$} Recall \cite{wu} that in general, the diffeological dual $V^*$ of $V$ is \emph{not} even isomorphic to $V$, so not diffeomorphic to it; this occurs already in the 
finite-dimensional case (in addition to \cite{wu}, see Example 3.1 in \cite{multilinear}). Thus, it might well occur that while the diffeology on $V$ is not standard, the corresponding functional diffeology on its 
dual is. The following statement makes it precise.

\begin{theorem}\label{dual:diff-gy:standard:thm}
Let $V$ be a finite-dimensional diffeological vector space, and let $V^*$ be its diffeological dual. Then the functional diffeology on $V^*$ is standard (\emph{i.e.}, $V^*$ is diffeomorphic to some $\matR^k$ with 
standard diffeology, for the appropriate $k$).
\end{theorem}

\begin{proof}
This is an easy consequence of Remark 3.6 and Property 8 (Section 5) of \cite{wu}. Indeed, let $V_0$ be the maximal standard subspace of $V$ that splits off smoothly, and let $V=V_0\oplus V_1$ be the 
corresponding smooth decomposition. It follows from Remark 3.6 of \cite{wu} that 
$$V^*=(V_0\oplus V_1)^*\cong V_0^*\times V_1^*=V_0^*.$$ By Property (8) in \cite{wu}, since $V_0$ is fine and finite-dimensional, so is $V_0^*$; in particular, it is a standard space.
\end{proof}

\paragraph{The subspace $V_0$ and the diffeological dual} The proofs of Lemma \ref{V_0:smooth:summand:lem} and of Theorem \ref{dual:diff-gy:standard:thm} suggest that each pseudo-metric on $V$ gives 
a natural identification between the corresponding $V_0$ and $V^*$ (that the two spaces are \emph{a priori} diffeomorphic is now obvious, since they are both standard spaces of the same dimension). More 
precisely, the following is true.

\begin{theorem}\label{diffeo:V_0:dual:thm}
Let $V$ be a finite-dimensional diffeological vector space, and let $\langle\cdot|\cdot\rangle_A$ be a pseudo-metric on $V$. The restriction on the subspace $V_0$ of the induced map $\Psi:V\to V^*$ 
(\emph{i.e.}, the map given by $v\mapsto[w\mapsto\langle w|v\rangle_A]$) is a diffeomorphism.
\end{theorem}

\begin{proof}
As has been already observed in the proof of Theorem \ref{dual:diff-gy:standard:thm}, the subspace $V_0$ admits an orthonormal (with respect to the canonical scalar product on $\matR^n$ underlying $V$) 
basis $\{v_1,\ldots,v_k\}$ composed of eigenvectors of $A$; furthermore, if $f_i=\Psi(v_i)$ for $i=1,\ldots,k$ then $f_i$ form a basis of $V^*$. In particular, the restriction of $\Psi$ to $V_0$ is a bijection with 
$V^*$, hence an isomorphism.

Now, applying the reasoning from the proof of Lemma \ref{V_0:smooth:summand:lem} we find that each plot $p$ of $V_0$ is of form $p(u)=p_1(u)v_1+\cdots+p_k(u)v_k$ for some smooth real-valued maps 
$p_1,\cdots,p_k$. Hence the composition $\Psi\circ p$ writes as $(\Psi\circ p)(u)=p_1(u)f_1+\cdots+p_k(u)f_k$, which is obviously a plot of $V^*$ (the maps $p_1,\cdots,p_k$ being smooth, this is a plot for any 
vector space diffeology on $V^*$), therefore $\Psi$ is smooth. \emph{Vice versa}, arguing as in the proof of Theorem \ref{dual:diff-gy:standard:thm}, every plot $q$ of $V^*$ writes as 
$q(u)=q_1(u)f_1+\cdots+q_k(u)f_k$ for some smooth real-valued maps $q_1,\cdots,q_k$. Thus, the composition $\Psi^{-1}\circ q$ is of form $(\Psi^{-1}\circ q)(u)=q_1(u)v_1+\cdots+q_k(u)v_k$, which is a plot 
of $V_0$. We conclude that $\Psi$ is smooth with smooth inverse, hence the conclusion.
\end{proof}

\paragraph{The induced metric on $V^*$} It is now easy to see that any pseudo-metric on $V$ induces a true metric on $V^*$, via the smooth surjection $\Psi$ described in Theorem \ref{diffeo:V_0:dual:thm}. 
Let us describe this precisely; notice first of all that if $f\in V^*$ then there is a single element $v_0\in\Psi^{-1}(f)$ such that $v_0\in V_0$. Furthermore, every other element in $\Psi^{-1}(f)$ is of form $v_0+v_1$, 
where $v_1\in V^{\perp}$, the orthogonal of the whole $V$ with respect to the pseudo-metric $\langle\cdot|\cdot\rangle_A$ (equivalently, the subspace $V_1$ spanned by all the eigenvectors relative to the
eigenvalue $0$). Let now $g\in V^*$ be another element of the dual $V^*$, and let $w_0+w_1\in\Psi^{-1}(g)$. Then obviously, 
$\langle v_0+v_1|w_0+w_1\rangle_A=\langle v_0|w_0\rangle_A=\langle(\Psi|_{V_0})^{-1}(f)|(\Psi|_{V_0})^{-1}(g)\rangle_A$. The first equality implies that the pushforward of the pseudo-metric
$\langle\cdot|\cdot\rangle_A$ to $V^*$ is well-defined (it could have been replaced by a reference to \cite{wu}, Proposition 3.12 (1), of which it is a partial instance); the second equality means that this 
pushforward is a true metric. We summarize this discussion in the following statement.

\begin{corollary}
Any pseudo-metric $\langle\cdot|\cdot\rangle_A$ on $V$ induces a true metric on the diffeological dual $V^*$ of $V$, via the natural pairing that assigns to each $v\in V$ the smooth linear functional
$\langle\cdot|v\rangle_A$.
\end{corollary}

Using the reasoning employed so far, we can also establish the inverse of this statement, namely, that \emph{every smooth metric (scalar product) on $V^*$ is induced by a pseudo-metric on $V$.} Indeed, let 
$\langle\cdot|\cdot\rangle_B$ be smooth metric on $V^*$, given by a $k\times k$ matrix $B$. Let $\{f_1,\cdots,f_k\}$ be an orthonormal basis of $V^*$ composed of eigenvectors of $B$.\footnote{Note that the 
basis $\{f_1,\cdots,f_k\}$ generates the standard diffeology on $V^*$, in the sense that each plot $q:U\to V^*$ is locally of form $q(u)=q_1(u)f_1+\cdots+q_k(u)f_k$ for some smooth functions 
$q_1,\cdots,q_k:U\to\matR$. Indeed, each plot does write in this manner, simply by virtue of $\{f_1,\cdots,f_k\}$ being a basis; furthermore, $\langle\cdot|\cdot\rangle_B$ being smooth, and each constant map 
being a plot, implies that $\langle q(u)|f_i\rangle_B=q_i(u)$ is smooth in the usual sense as a map $U\to\matR$.} As has already been noted, the form $\sum_{i=1}^kf_i\otimes f_i$ gives a certain pseudo-metric
$\langle\cdot|\cdot\rangle_A$ on $V$; it remains to check that this pseudo-metric does induce $\langle\cdot|\cdot\rangle_B$ on $V^*$, and this follows from the fact that $\{f_1,\cdots,f_k\}$ is an orthonormal 
basis.

\subsection{A simple example of a pseudo-metric}

Let us return to an above example, considering $V=\matR^3$ endowed with the vector space diffeology generated by the plot $p:\matR\to V$ given by $p(x)=|x|(e_2+e_3)$. Observe first that the diffeological 
dual of $V$ is generated by maps $e^1$ and $e^2-e^3$ (with $\{e^1,e^2,e^3\}$ being the canonical basis of the usual dual of $\matR^3$). In particular, $\dim(V^*)=2$.

It is also easy to see that any smooth symmetric bilinear form on $V$ is given by a matrix of form
${\small{\left(\begin{array}{ccc} 
c & a & -a \\
a & b & -b \\
-a & -b & b \end{array}\right)}}$ for some $a,b,c\in\matR$ (the vector $(0,|x|,|x|)$ must be orthogonal to any other vector of $\matR^3$ with respect to the associated bilinear form). For this matrix to define a
pseudo-metric, we must have $b,c>0$ and $a^2<bc$.

To give a specific example, we can take $a=1$, $b=c=2$, obtaining
$A={\small{\left(\begin{array}{ccc} 
2 & 1 & -1 \\
1 & 2 & -2 \\
-1 & -2 & 2 \end{array}\right)}}$. The two positive eigenvalues of $A$ are $\lambda_1=3+\sqrt3$ and $\lambda_2=3-\sqrt3$; the corresponding (non unitary) eigenvectors are 
$v_1'={\small{\left(\begin{array}{c} 
3+\sqrt3 \\ 
3+2\sqrt3 \\ 
-3-2\sqrt3 \end{array}\right)}}$ and
$v_2'={\small{\left(\begin{array}{c} 
3-\sqrt3 \\ 
3-2\sqrt3 \\ 
2\sqrt3-3 \end{array}\right)}}$. We can however easily find that the subspace $V_0$ can be (better) described as $V_0=\mbox{Span}(e_1,e_2-e_3)$; the restriction of $\langle\cdot|\cdot\rangle_A$ to $V_0$, 
with respect to the basis $\{e_1,e_2-e_3\}$ has the matrix 
${\small{\left(\begin{array}{cc} 2 & 2 \\ 2 & 8 \end{array}\right)}}$. We also note that the subspace $V_1$ (generated by eigenvectors relative to $0$) is 
$\mbox{Span}({\small{\left(\begin{array}{c} 0 \\ 1 \\ 1 \end{array}\right)}})$.

Finally, let us calculate the matrix of the induced metric on $V^*$ with respect to its basis $\{e^1,e^2-e^3\}$. For this, we need to find their pre-images with respect to $\Psi|_{V_0}$ (we just write $\Psi$ for 
brevity). By an easy calculation $\Psi^{-1}(e^1)=\frac23 e_1-\frac16(e_2-e_3)$ and $\Psi^{-1}(e^2-e^3)=-\frac13 e_1+\frac13(e_2-e_3)$. Calculating their pairwise products with respect to the pseudo-metric
$\langle\cdot|\cdot\rangle_A$, we find the matrix
$\frac19{\small{\left(\begin{array}{cc} 6 & 5 \\ 5 & 6 \end{array}\right)}}$.

\vspace{1cm}

\noindent University of Pisa \\
Department of Mathematics \\
Via F. Buonarroti 1C\\
56127 PISA -- Italy\\
\ \\
ekaterina.pervova@unipi.it\\

\end{document}